\documentclass[11pt,leqno]{amsart}
\usepackage{verbatim,amssymb,amsfonts,latexsym,amsmath,amsthm,bbm,nicefrac,xspace,enumerate,tikz,marvosym,mathtools}
\usetikzlibrary{decorations.pathmorphing,shapes}

\newtheorem{theorem}{Theorem}[section]
\newtheorem*{theorem*}{Theorem}
\newtheorem*{corollary*}{Corollary}
\newtheorem*{maintheorem}{Main Theorem}

\newtheorem{question}[theorem]{Question}

\newtheorem*{claim}{Claim}
\theoremstyle{definition}

\theoremstyle{remark}

\newcommand{\N}{\mathbb{N}}

\renewcommand{\P}{\mathcal{P}}

\newcommand{\explicitSet}[1]{\left\lbrace #1 \right\rbrace}
\newcommand{\brackets}[1]{\left\langle #1 \right\rangle}
\newcommand{\set}[2]{\explicitSet{#1 \colon #2}}
\newcommand{\seq}[2]{\brackets{#1 \colon #2}}
\newcommand{\<}{\langle}
\renewcommand{\>}{\rangle}
\renewcommand{\a}{\alpha}
\renewcommand{\b}{\beta}

\newcommand{\dlt}{\delta}

\newcommand{\z}{\zeta}
\renewcommand{\k}{\kappa}

\newcommand{\w}{\omega}
\newcommand{\0}{\emptyset}

\newcommand{\sub}{\subseteq}
\newcommand{\rest}{\!\restriction\!}
\newcommand{\cat}{\!\,^{\frown}}

\newcommand{\cf}{\mathrm{cf}}
\newcommand{\card}[1]{\left\lvert #1 \right\rvert}

\newcommand{\PP}{\mathbb{P}}
\newcommand{\forces}{\Vdash}

\newcommand{\pwmf}{\nicefrac{\mathcal{P}(\w)}{\mathrm{Fin}}}

\newcommand{\continuum}{\mathfrak{c}}

\newcommand{\dom}{\mathfrak d}

\newcommand{\ch}{\ensuremath{\mathsf{CH}}\xspace}
\newcommand{\gch}{\ensuremath{\mathsf{GCH}}\xspace}
\newcommand{\zfc}{\ensuremath{\mathsf{ZFC}}\xspace}

\newcommand{\oca}{\ensuremath{\mathsf{OCA}}\xspace}

\begin{document}

\title[Nontrivial automorphisms of $\mathcal P(\omega)/\mathrm{Fin}$ in Cohen models]{Nontrivial automorphisms of $\mathcal P(\omega)/\mathrm{Fin}$ \\ in Cohen models}
\author{Will Brian}
\address {
W. R. Brian\\
Department of Mathematics and Statistics\\
University of North Carolina at Charlotte\\
Charlotte, NC 
(USA)}
\email{wbrian.math@gmail.com}
\urladdr{wrbrian.wordpress.com}
\author{Alan Dow}
\address {
A. S. Dow\\
Department of Mathematics and Statistics\\
University of North Carolina at Charlotte\\
Charlotte, NC 
(USA)}
\email{adow@charlotte.edu}
\urladdr{https://webpages.uncc.edu/adow}

\subjclass[2020]{03E35, 08A35, 54D40}
\keywords{automorphisms of $\mathcal P(\omega)/\mathrm{Fin}$, Cohen forcing, Davies trees, elementary submodels, consequences of Jensen's $\square$, weak Freese-Nation property}

\thanks{The first author is supported in part by NSF grant DMS-2154229}

\begin{abstract}
We show that if $\kappa < \aleph_\w$ Cohen reals are added to a model of $\mathsf{CH}$, then there are nontrivial automorphisms of $\mathcal P(\omega)/\mathrm{Fin}$ in the extension. 
Under some further hypotheses on the ground model, namely the existence of long enough sage Davies trees (which follows from $\mathsf{SCH}$ plus $\square_\lambda$ for every $\lambda$ with $\mathrm{cf}(\lambda) = \omega$), we prove the same result for cardinals $\kappa \geq \aleph_\w$ as well. 
This extends a result a Shelah and Stepr\={a}ns, who proved the result for $\kappa = \aleph_2$. 
\end{abstract}

\maketitle

\section{Introduction}

Given a cardinal $\k \geq \aleph_2$, the ``$\k$-Cohen model'' means any model obtained by forcing over a model of \ch with the poset $\mathrm{Fn}(\k,2)$ of finite partial functions $\k \to 2$, the usual poset for adding $\k$ mutually generic Cohen reals. 
In this paper we look at the question of whether the $\k$-Cohen model contains nontrivial automorphisms of the Boolean algebra $\pwmf$. 
We prove that the answer is yes if $\kappa < \aleph_\omega$, and the answer is yes for all $\k$ under some additional ``$L$-like'' hypotheses on the ground model. 
Furthermore, if $\k$ is regular then not only do these models contain nontrivial automorphisms, but they contain $2^\continuum$ of them, the maximum possible number. 

\begin{maintheorem}
Assume \ch, let $\k \geq \aleph_2$, and let $\PP = \mathrm{Fn}(\k,2)$. 
\begin{enumerate} 
\item If $\k < \aleph_\w$, then every automorphism of $\pwmf$ in $V$ extends to $2^\k$ automorphisms of $\pwmf$ in $V^\PP$. 
\item More generally, if $\k$ is a regular cardinal and there is a sage Davies tree of length $\k$, then every automorphism of $\pwmf$ in $V$ extends to $2^\k$ automorphisms of $\pwmf$ in $V^\PP$.
\item If $\k$ is a singular cardinal and there is a sage Davies tree of length $\nu = \k^\w$, then there are nontrivial automorphisms of $\pwmf$ in $V^\PP$. 
\end{enumerate}
\end{maintheorem}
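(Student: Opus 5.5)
The plan is to build the extension of a ground-model automorphism $\phi$ by a recursion along an increasing, continuous chain of intermediate Cohen extensions. The base case is the Shelah--Stepr\={a}ns argument for $\k=\aleph_2$, which I will generalize.

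\emph{One-step lemma.} Work in $V^{\mathrm{Fn}(A,2)}$ for some set $A$, and let $\sigma$ be an automorphism of $\pwmf$ there. For a coordinate $\a\notin A$, adding the Cohen real $c_\a$ gives $V^{\mathrm{Fn}(A\cup\{\a\},2)}$. Here $\sigma$ extends to an automorphism of $\pwmf$ of the bigger model, and in at least two incompatible ways. The reason is that the Cohen real is coded by a subset $x_\a$ of $\w$ that is Cohen-generic over the smaller model. Since $\sigma$ preserves the structure of ground-model $\pwmf$ (a countable-cofinality-preserving Boolean isomorphism), we may send $x_\a$ to either of two generics, $x_\a$ or its complement twisted appropriately. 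We then extend by names: a name $\dot y$ is sent to the name obtained by applying $\sigma$ to every ground-model set in its evaluation. The two choices give the factor $2$ at each step, and hence $2^\k$ extensions overall.

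\emph{Handling general subsets $B\subseteq\k$.} Since $\mathrm{Fn}(\k,2)$ is not a linear iteration in a useful way, we need compatibility. Whenever automorphisms $\sigma_B$ have been defined on $V^{\mathrm{Fn}(B,2)}$ for suitably closed sets $B$ (namely $B=M\cap\k$ for countably closed elementary submodels $M$), the automorphisms for $B$ and $B'$ must agree on $V^{\mathrm{Fn}(B\cap B',2)}$ and amalgamate to $V^{\mathrm{Fn}(B\cup B',2)}$. This is exactly what the sage Davies tree provides. It gives a sequence $\seq{M_\a}{\a<\k}$ of elementary submodels of size $\aleph_1$, each containing $\phi$, with $\bigcup M_\a\supseteq\k$. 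Moreover, each initial union $\bigcup_{\b<\a}M_\b\cap M_\a$ is a countable union of models that are themselves in $M_\a$ (the ``sage'' condition). Using this, I recursively define $\sigma_\a$ on the reals of $V^{\mathrm{Fn}(\k\cap\bigcup_{\b\le\a}M_\b,2)}$. At stage $\a$, the part already determined inside $M_\a$ is determined by countably many pieces that $M_\a$ can see. Under \ch in $V$ the model $M_\a$ has size $\aleph_1$, so the extension problem reduces to the $\aleph_1$/$\aleph_2$ case: an $\aleph_1$-length recursion in which each real is a countable object. Limit stages are unions. Finally $\Phi=\bigcup_\a\sigma_\a$ is an automorphism of $\pwmf$ in $V^\PP$, since every real has a countable support and so is captured at some stage.

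\emph{Deriving the three parts.} For $\k<\aleph_\w$, sage Davies trees of length $\k$ exist in \zfc, by the standard Milovich/Soukup construction for $\k<\aleph_\w$. This gives (1) from (2). For (2), regularity of $\k$ is used in counting, so that $2^\k$ distinct branches of binary choices survive. For (3), with $\k$ singular, run the construction along a tree of length $\nu=\k^\w$, which is the length needed to enumerate the countable supports. Nontriviality then comes from a single binary choice that disagrees with every ground-coded trivial map: a trivial automorphism is induced by a function $\w\to\w$, which lies in some $V^{\mathrm{Fn}(C,2)}$ with $C$ countable. Diagonalizing at a later coordinate defeats it. Since there are only $\continuum$ trivial automorphisms, we arrange the choices to kill each one in turn.

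The main obstacle is the stage-$\a$ amalgamation. We must show that the automorphisms already fixed on the various sub-extensions indexed by the countably many pieces of $\bigcup_{\b<\a}M_\b\cap M_\a$ cohere into one automorphism of the intermediate model, and that this automorphism extends to the model for $M_\a$. I would first try to establish a ``projection'' lemma: for countably closed $B\subseteq C$, the automorphism on $V^{\mathrm{Fn}(C,2)}$ restricts to the one on $V^{\mathrm{Fn}(B,2)}$. This would be maintained as an inductive hypothesis, using nice names and the fact that a Cohen name depends on countably many coordinates.
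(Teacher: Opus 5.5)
Your outline shares the paper's skeleton: a sage Davies tree, an $\w_1$-length recursion at each stage, unions at limits, and countable supports for nontriviality. But there is a genuine gap in the engine of the construction. Your one-step lemma is unjustified, and the reason you give does not work. Applying $\sigma$ ``to the ground-model sets in the evaluation'' of a name is not even well defined. A new real is assembled from infinitely many ground-model pieces of its name (e.g.\ $y=\bigcup_{p\in G}\{n : p\forces n\in\dot y\}$), while $\sigma$ only sees classes mod finite. A nontrivial $\sigma$ has no Borel lifting that could be transported along the Borel function computing a new real from $c_\a$, and nothing makes your recipe a homomorphism, let alone onto. Only the first move is fine: a Cohen real is free over the old algebra, so it can be sent to itself or to another free element.

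After that move, the paper runs a back-and-forth of length $\w_1$. Each new real $b$ cuts the current uncountable domain $D$ in an $(\w,\w)$-gap, and $b$ may only be sent to an $s$ that cuts the uncountable image algebra $E$ in exactly the image gap. That is a type over an uncountable set, which countable saturation does not provide. The paper realizes it with a filler generic over $E$ for the countable gap-splitting poset, i.e.\ by spending a Cohen real generic over $E$. Each step spends one, so every stage needs $\aleph_1$ fresh Cohen reals over $V[G\cap\bigcup_{\xi<\a}M_\xi]$ lying in $M_\a[G]$. That is Theorem~\ref{thm:firewood}(1), and one new coordinate cannot supply it.

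The other missing ingredient is Theorem~\ref{thm:firewood}(2): every real of $M_\a[G]\setminus\bigcup_{\xi<\a}M_\xi[G]$ cuts $\pwmf\cap\bigcup_{\xi<\a}M_\xi[G]$ in an $(\w,\w)$-gap with witnesses in $M_\a[G]$. This is what the countable family $\mathcal N_\a$ is used for. Together with the requirement $\Phi_\a\in M_\a[G]$, it puts the image gaps inside $M_\a[G]$, so $M_\a[G]$ can compute generic fillers. Nothing is amalgamated from automorphisms built on separate sub-extensions, so the ``stage-$\a$ amalgamation'' you single out is not the real obstacle. Your projection lemma ($\Phi$ mapping each $V^{\mathrm{Fn}(B,2)}$ onto itself) is not maintained by the paper. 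It also sits badly with gap filling, since the image of $b$ is built from a fresh coordinate outside the support of $b$.

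You also recurse along the wrong chain. $V^{\mathrm{Fn}(\k\cap\bigcup_{\b\le\a}M_\b,2)}$ is not continuous at limits of countable cofinality: a real whose support meets cofinally many earlier pieces is new at the limit and need not lie in $M_\a[G]$. The same defect breaks your coordinate-by-coordinate iteration. Moreover, a single stage can gain far more than $\aleph_1$ new reals (any countable set containing a fresh coordinate of $M_\a$ supports one), most of them outside $M_\a[G]$. So neither ``limit stages are unions'' nor ``reduces to an $\aleph_1$-length recursion'' holds. The paper instead uses $B_\a$, the algebra generated by $\pwmf\cap\bigcup_{\xi<\a}M_\xi[G]$, which is continuous and $\aleph_1$-generated over its predecessors.

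For the count $2^\k$, the choices made before stage $\a$ can be read off from $\Phi_\a$, which must lie in $M_\a[G]$, so they must be available in $M_\a$. Hence the choice function $h$ goes into the parameter of the tree. Then the tree and the reals $c^\a_0$ depend on $h$, and separating $\Phi_h$ from $\Phi_{h'}$ uses the club of property $(6)$ plus stationary disagreement sets. That, not a branch count, is where regularity enters.

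For (3) with $\k^\w>\k$, the sets $\k\cap M_\a\setminus\bigcup_{\xi<\a}M_\xi$ are pairwise disjoint, so all but at most $\k$ of the $\nu$ stages have no fresh coordinates. The paper gets fresh Cohen reals from an almost disjoint family in $[\k]^\w$ of size $\k^\w$ placed in the parameter. Your nontriviality idea is right in spirit, but no bookkeeping is needed. Swapping $c^\a_0$ and $c^\a_1$ at every stage defeats every almost permutation $f\in V[G_S]$ with $S$ countable. For all but countably many $\a$ the pair is mutually generic over $V[G_S]$, so $f[c^\a_0]\ne^* c^\a_1$.
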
 

A Davies tree is a sequence $\seq{M_\a}{\a < \k}$ of elementary submodels of some large fragment $H_\theta$ of the set-theoretic universe such that the $M_\a$ enjoy certain coherence and covering properties. (These sequences are called ``trees'' because they are constructed by cleverly enumerating the leaves of a tree of elementary submodels of $H_\theta$.) These structures provide a unified framework for carrying out a wide variety of constructions in infinite combinatorics. They were introduced by R. O. Davies in \cite{Davies}, and an excellent survey of their many uses can be found in \cite{Soukups}. 
A version of Davies trees using countably closed models, called sage Davies trees, is introduced in \cite{Soukups}. 
Their construction uses the singular cardinals hypothesis $\mathsf{SCH}$ below $\k$ (the length of the sequence), plus $\square_\lambda$ for limit cardinals $\lambda < \k$ of countable cofinality. 
As these hypotheses are vacuously true for $\k < \aleph_\w$, sage Davies trees of length $\k<\aleph_\w$ can be proved to exist from \zfc alone. 
In particular, part $(1)$ of our main theorem is a direct consequence of $(2)$; we have stated it separately simply to emphasize that $(1)$ is a theorem of \zfc, while the more general $(2)$ requires an additional hypothesis when $\k \geq \aleph_\w$. 
As shown in \cite{Soukups}, it is consistent relative to a supercompact cardinal that there are no sage Davies trees of length $\geq \aleph_\w$ (a large cardinal hypothesis is necessary because the failure of either $\mathsf{SCH}$ or of $\square_\lambda$ has large cardinal strength). 
We do not know whether the general conclusion of $(2)$ is a theorem of \zfc.


The $\kappa = \aleph_2$ case of our main theorem was proved by 
Shelah and Stepr\={a}ns in \cite{SS1}. 
(Their theorem states only that the $\aleph_2$-Cohen model contains nontrivial automorphisms, but their argument can be modified to obtain $2^{\aleph_2}$ automorphisms extending any given ground model automorphism.) 
However, the Shelah-Stepr\={a}ns proof does not work for $\k \geq \aleph_3$. 

Roughly, the reason is that $\pwmf$ has a special structure in the $\aleph_2$-Cohen model, a kind of ``near-saturation'' that enables some \ch-like constructions, or modifications of them, to go through in the $\aleph_2$-Cohen model (see \cite{Step} or \cite{DH}). 
(Recall that $\pwmf$ is a countably saturated Boolean algebra, as shown in \cite{Par} and \cite{JO}. 
Under \ch, where $\pwmf$ can be written as an increasing union of countable subalgebras, this enables us to recursively construct many automorphisms of $\pwmf$.) 
The Shelah-Stepr\={a}ns result in \cite{SS1} predates \cite{Step} and \cite{DH}, but in hindsight, their argument utilizes aspects of this special structure in the $\aleph_2$-Cohen model. 
And these nice features of the $\aleph_2$-Cohen model that drive the Shelah-Stepr\={a}ns proof are simply lacking in the $\k$-Cohen model for larger $\k$. 

In a bit more detail, the Shelah-Stepr\={a}ns argument utilizes the fact that if $M \models \zfc$, and $g$ is a Cohen-generic real over $M$, then every real $x \in M[g]$ cuts $\pwmf \cap M$ in an $(\w,\w)$-gap. 
This means that there are countable sets $L,U \sub \pwmf \cap M$ such that $x$ is above every member of $L$ and below every member of $U$, and meets every member of $\pwmf \cap M$ that is not below some $\ell \in L$ or above some $u \in U$. 
This fact is important when trying to build an automorphism of $\pwmf$. To extend a partial automorphism $\Phi$ with domain $D$ to some $x \notin D$, we need to find a $y \in \pwmf$ such that if $x$ cuts $D$ in the gap $(L,U)$, then $y$ cuts $\Phi[D]$ in the gap $(\Phi[L],\Phi[U])$. 
The natural forcing poset to fill an $(\w,\w)$-gap in $\pwmf$ is countable, hence forcing equivalent to the Cohen poset $\mathrm{Fn}(\w,2)$. 
Therefore, if we have access to a real $h$ Cohen-generic over $\Phi$, and $D$, then a generic for the poset that fills $(\Phi[L],\Phi[U])$ can be defined from $h$. 

Now let us consider the $\aleph_2$-Cohen model $V[G]$. 
Because $\mathrm{Fn}(\w_2,2)$ is equivalent to a length-$\w_2$ finite support iteration of $\mathrm{Fn}(\w_1,2)$, we can write $V[G]$ as an increasing union $V[G] = \bigcup_{\a < \w_2}V[G_\a]$, where each $V[G_\a]$ is a model of \ch and contains $\aleph_1$ ``fresh'' Cohen-generic reals. 
To build an automorphism $\Phi$ in $V[G]$, we begin with some automorphism $\Phi_0$ of $\pwmf$ in $V$ and recursively extend it to each $V[G_\a]$. At successor steps, to extend $\Phi$ from $\pwmf \cap V[G_\a]$ to $\pwmf \cap V[G_{\a+1}]$, we do the following: 
\begin{itemize}
\item[$\circ$] Using the fact that $V[G_{\a+1}] \models \ch$, enumerate $\pwmf \cap V[G_{\a+1}]$ in order type $\w_1$, say $\pwmf \cap V[G_{\a+1}] = \{b_\xi :\, \xi < \w_1\}$. 
\item[$\circ$] Stepping through the $b_\xi$ one at a time, at each step we add one or two new reals to the domain of $\Phi$, and close the domain under Boolean operations. So at step $\xi$, the domain of $\Phi$ is countably generated over $\pwmf \cap V[G_\a]$. From this and the comments above, it follows that the new real $b_\xi$ cuts the domain of $\Phi$ in an $(\w,\w)$-gap $(L,U)$.
\item[$\circ$] Because $V[G_{\a+1}]$ contains $\aleph_1$ ``fresh'' Cohen reals over $V[G_\a]$, some of these are still untouched and can be used to fill the $(\w,\w)$-gap $(\Phi[L],\Phi[U])$ and find an image for $b_\xi$. After adding $b_\xi$ to the domain of $\Phi$, it can be added to the image of $\Phi$ in a similar fashion.
\end{itemize}

If $\k \geq \aleph_3$ then this argument breaks down. In this case $V[G]$ is no longer a union of intermediate models $V[G_\a]$ satisfying \ch. 
Yet in the preceding argument, it was essential that when extending $\Phi$ from $V[G_\a]$ to $V[G_{\a+1}]$ that the new reals be enumerated in type $\w_1$. 
Without this, we cannot keep the domain of $\Phi$ countably generated over $\pwmf \cap V[G_\a]$, and there is no reason to think a new real $b_\xi$ should cut the domain of $\Phi$ in an $(\w,\w)$-gap. 

In order to extend this argument to higher Cohen models, we need a sequence $\seq{B_\a}{\a < \k}$ of subalgebras of $\pwmf$ such that 
\vspace{-2mm}
\begin{itemize}
\item[$\circ$] each $B_{\a+1}$ is $\w_1$-generated over $B_\a$ (so the generators can be dealt with in order type $\w_1$), and 
\item[$\circ$] each $B_{\a+1}$ is able to generically fill all the $(\w,\w)$-gaps that arise in the process of extending our automorphism. 
\end{itemize}

The main insight of this paper is that such a sequence of subalgebras is provided by forcing over a sage Davies tree. 
Given a sage Davies tree $\seq{M_\a}{\a < \k}$ in the ground model, we set $B_\a = \pwmf \cap \bigcup_{\xi < \a}M_\xi[G]$. 
It is not too hard to show that each $B_{\a+1}$ is $\w_1$-generated over $B_\a$ and contains $\aleph_1$ ``fresh'' Cohen-generic reals over $B_\a$. 
The difficulty lies in making sure that $M_{\a+1}[G]$ is able to utilize these Cohen reals to fill all the $(\w,\w)$-gaps that arise while trying to extend $\Phi$ from $B_\a$ to $B_{\a+1}$. 
For this to happen, we must ensure these gaps, and their images under $\Phi$, are members of $M_{\a+1}[G]$, since then $M_{\a+1}[G]$ can compute sufficiently generic filters on the required gap-filling posets. 

Section 2 describes what becomes of a sage Davies tree after a $\k$-Cohen forcing. While the resulting sequence of models is no longer a sage Davies tree, it remembers enough of its original structure to provide a useful combinatorial tool in the extension. 
In Section 3 we show how to use this tool to extend the Shelah-Stepr\={a}ns argument from \cite{SS1} and obtain many automorphisms of $\pwmf$. 
Using remnants of sage Davies trees in Cohen models was first explored in \cite{BDS}, though in a rather different way. 
Also, let us point out that Theorem~\ref{thm:firewood}(2) is reminiscent of \cite{FS,FKS}, where it is shown that $\pwmf$ has the weak Freese-Nation property in $\k$-Cohen models under essentially the same hypotheses as in our main theorem.

\section{Sage Davies trees after Cohen forcing}

In what follows, $H_\theta$ denotes the set of all sets hereditarily smaller than some big enough cardinal $\theta$. (Taking $\theta = (2^\k)^+$ suffices in all that follows.) Given two sets $M$ and $N$, we write $M \prec N$ to mean that $(M,\in)$ is an elementary submodel of $(N,\in)$. A set $M$ is \emph{countably closed} if $M^\w \sub M$. If $M$ satisfies (enough of) \zfc, this is equivalent to the property $[M]^\w \sub M$.


Let $\k$ be a cardinal with $\cf(\k) > \w$. A \emph{sage Davies tree for $\k$} over a set $P$ (a parameter) is a sequence $\seq{M_\a}{\a < \k}$ of elementary submodels of $H_\theta$, for some ``big enough'' regular cardinal $\theta$, such that 
\begin{enumerate}
\item Each $M_\a$ is countably closed, and $|M_\a| = \aleph_1$, and $P \in M_\a$. 
\item $[\k]^\w \sub \bigcup_{\a < \k}M_\a$. 
\item For each $\a < \k$, there is a countable collection $\mathcal N_\a$ of countably closed elementary submodels of $H_\theta$, each containing $P$, such that 
$$\textstyle \bigcup_{\xi < \a}M_\xi \,=\, \bigcup \mathcal N_\a.$$
\item For each $\a < \k$, $\seq{M_\xi}{\xi < \a} \in M_\a$. 
\item $\bigcup_{\a < \k}M_\a$ is a countably closed elementary submodel of $H_\theta$. 
\end{enumerate} 
\begin{enumerate}
\item[$(6)$] Furthermore, if $\k$ is a regular cardinal then 
$$\textstyle \set{\dlt \in \k\,}{\dlt = \k \cap \bigcup_{\xi < \dlt}M_\xi}$$ 
is a closed and unbounded subset of $\k$. 
\end{enumerate} 
The definition of sage Davies trees in \cite{Soukups} lists only properties $(1)-(5)$. We have taken the liberty of adding $(6)$ into the definition because it follows easily from the construction of the sage Davies trees in \cite[Section 14]{Soukups}. 
Also, though we have stated the full definition from \cite{Soukups} in $(1)-(5)$, property $(5)$ is not used anywhere in what follows. 

\smallskip

Let $\mathrm{Fn}(A,2)$ denote the usual Cohen forcing consisting of all the finite partial functions $A \to 2$, ordered by extension. 
Let $\mathrm{supp}(q)$ denote the domain of a condition $q \in \mathrm{Fn}(A,2)$, and if $\dot x$ is an $\mathrm{Fn}(A,2)$-name then let $\mathrm{supp}(\dot x) = \bigcup \set{\mathrm{supp}(q)}{q \in \mathrm{Fn}(A,2) \cap \mathrm{TC}(\dot x)}$ (where $\mathrm{TC}(\dot x)$ denotes the transitive closure of $\dot x$).

Given a forcing poset $\PP$, recall that a \emph{nice $\PP$-name} for a subset of $\w$ is a subset $\dot x$ of $\check \w \times \PP$ such that for each $n \in \N$, $\set{p}{(\check n,p) \in \dot x}$ is an antichain in $\PP$. 
For convenience, we will write $n$ for $\check n$, and consider a nice name to be a subset of $\w \times \PP$. 
The evaluation of a nice name $\dot x$ with respect to a filter $G \sub \PP$ is denoted 
$\mathrm{val}_G(\dot x) = \set{n \in \w}{(n,p) \in \dot x \text{ for some } p \in G}$. 

\begin{theorem}\label{thm:firewood}
Let $\k$ be an infinite cardinal with $\k^\w = \k$, 
and suppose $\seq{M_\a}{\a < \k}$ is a sage Davies tree for $\k$. 
If $G$ is $\mathrm{Fn}(\k,2)$-generic over $V$, then the sequence $\seq{M_\a[G]}{\a < \k}$ in $V[G]$ satisfies: 
\begin{enumerate}
\item $\card{\mathrm{Lim}(\k) \cap M_\a \setminus \bigcup_{\xi < \a}M_\xi} = \aleph_1$ for each $\a < \k\vphantom{f^f}$. 
Consequently, there is a function $C: \mathrm{Lim}(\k) \to \P(\w)$ 
such that for all $\a < \k$, $C \in M_\a[G]$ and $C$ maps $\mathrm{Lim}(\k) \cap M_\a$ to a set of $\aleph_1$ reals mutually Cohen generic over $V\big[ G \cap \bigcup_{\xi<\a}M_\xi \big]$.
\item For each $\vphantom{f^{f^{f}}} \a < \k$ and $r \in \pwmf \cap M_\a[G] \setminus \bigcup_{\xi < \a} M_\xi[G]$, there are countable sets $L^r,U^r \sub \pwmf \cap \bigcup_{\xi < \a} M_\xi[G]$ 
such that for every $x \in \pwmf \cap \bigcup_{\xi < \a} M_\xi[G]$,
\begin{itemize}
\item[{\scriptsize $\circ$}] $\vphantom{f^f} x \leq r \ \, \Leftrightarrow \ \, x \leq \ell \text{ for some }\ell \in L^r$, 
\item[{\scriptsize $\circ$}] $r \leq x \ \, \Leftrightarrow \, \ u \leq x \text{ for some }u \in U^r$.
\end{itemize}
Furthermore, there are some such sets $\vphantom{f^{f^f}} L^r,U^r \in M_\a[G]$. 
\end{enumerate}
\end{theorem}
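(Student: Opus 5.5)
The plan is to treat the two parts separately, in each case working with the set $W_\a = \bigcup_{\xi<\a}M_\xi$, which by property (3) equals $\bigcup\mathcal N_\a$ for a countable family $\mathcal N_\a$ of countably closed elementary submodels. The common idea is to compare $V[G]$-objects in $\bigcup_{\xi<\a}M_\xi[G]$ with the intermediate model $V[G\cap W_\a]$, where $G \cap W_\a$ is short for $G \cap \mathrm{Fn}(\k\cap W_\a,2)$.

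\emph{Part (1).} By property (4), $\seq{M_\xi}{\xi<\a}\in M_\a$, so $W_\a$ and $\k\cap W_\a$ are definable in $M_\a$ and hence lie in $M_\a$. The key claim is that $\mathrm{Lim}(\k)\setminus W_\a$ has size at least $\aleph_1$. Given that claim, elementarity puts an injection $e:\w_1\to \mathrm{Lim}(\k)\setminus W_\a$ into $M_\a$. Since $M_\a$ is countably closed, $\w_1\sub M_\a$, so $\mathrm{ran}(e)\sub M_\a$; together with $|M_\a|=\aleph_1$ this gives exactly $\aleph_1$ limit ordinals in $M_\a\setminus W_\a$.

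I expect the key claim to be the main obstacle. I would try to prove it as follows. Each $N\in\mathcal N_\a$ is countably closed, and there are only countably many of them. Using $\k^\w=\k$, one could try to diagonalize against the countably many sets $N\cap\k$. Alternatively, one could use the clubs from property (6) when $\k$ is regular, or the fact that each model $M_\xi$ has size $\aleph_1$.

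Next define $C(\dlt)=\set{n\in\w}{G(\dlt+n)=1}$. This is definable from $G$ alone, so $C\in M_\a[G]$ for every $\a$. If $\dlt\in M_\a\setminus W_\a$ is a limit, then no $\dlt+n$ lies in $W_\a$, since $W_\a$ is closed under ordinal successor and predecessor. Distinct limits $\dlt$ use disjoint blocks of coordinates. By the product lemma, the reals $C(\dlt)$ for limits $\dlt\in M_\a\setminus W_\a$ are therefore mutually Cohen over $V[G\cap W_\a]$.

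\emph{Part (2).} Let $r\in M_\a[G]$ be given by a nice name $\dot r\in M_\a$. The name $\dot r$ is countable, so $S=\mathrm{supp}(\dot r)$ is countable and $S\sub M_\a$. Every real in $\bigcup_{\xi<\a}M_\xi[G]$ has a nice name with countable support in $W_\a$, so it lies in $V[G\cap W_\a]$. Write $S_0=S\cap W_\a$ and $S_1=S\setminus W_\a$. Then $G\cap S_1$ is Cohen generic over $V[G\cap W_\a]$, and $r$ is evaluated from $G\cap S_0$ and $G\cap S_1$.

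For each $q\in\mathrm{Fn}(S_1,2)$, define, using $G\cap S_0$,
\[
\ell_q=\set{n}{q\Vdash n\in\dot r}, \qquad u_q=\set{n}{q\not\Vdash n\notin\dot r}.
\]
These are the sets of $n$ that are forced, respectively not refuted, by $q$ over $V[G\cap W_\a]$. Set $L^r=\set{\ell_q}{q\in G\cap S_1}$ and $U^r=\set{u_q}{q\in G\cap S_1}$.

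A standard forcing argument shows, for $x\in V[G\cap W_\a]$, that $x\leq r$ if and only if some $q\in G$ forces $x\sub^* \dot r$, which holds if and only if $x\sub^*\ell_q$. The dual argument gives $r\leq x$ if and only if $u_q\sub^* x$ for some $q\in G$.

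It remains to check the locations of these sets. Since $S_0$ is countable and contained in $W_\a=\bigcup\mathcal N_\a$, and $\mathcal N_\a$ is countable, I would cover $S_0$ by finitely or countably many members of $\mathcal N_\a$. To land in a single model, I would pick for each $N\in\mathcal N_\a$ the countable set $S_0\cap N$, which lies in $N$ by countable closure, and split the names $\ell_q$ accordingly. It may be cleaner to use the fact that $\ell_q$ depends only on $S_0$-coordinates and to choose the covering with some care here. This places each $\ell_q$ and $u_q$ in $\bigcup_{\xi<\a}M_\xi[G]$. Finally, $L^r$ and $U^r$ are definable from $\dot r$, $W_\a$ and $G$, all of which are available in $M_\a[G]$, so $L^r,U^r\in M_\a[G]$.
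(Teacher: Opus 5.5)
Part (1) is essentially the paper's argument. The ``key claim'' you flag as the main obstacle is immediate from your third suggestion. For $\k\ge\aleph_2$ we have $\card{\k\cap W_\a}\le|\a|\cdot\aleph_1<\k$, so $\mathrm{Lim}(\k)\setminus W_\a$ even has size $\k$.

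The genuine gap is the last step of Part (2). Your $\ell_q$ and $u_q$ are computed from all of $G\cap\mathrm{Fn}(S_0,2)$, so they lie in $V[G\cap W_\a]$. The theorem, however, requires them to lie in $\bigcup_{\xi<\a}M_\xi[G]=\bigcup_{N\in\mathcal N_\a}N[G]$, and a real lies in $N[G]$ only if it lies in $V[G\cap N]$. When $S_0$ straddles several models this fails. No choice of covering repairs it, because $\ell_q$ itself depends on coordinates from different models.

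Here is a concrete instance. Take $\k\ge\aleph_3$ and $\a=\w_2+1$, and write $W_{\w_2}=\bigcup_{\xi<\w_2}M_\xi$. Pick limits $\lambda_1\in M_\a\cap W_{\w_2}\setminus M_{\w_2}$, $\lambda_2\in M_{\w_2}\setminus W_{\w_2}$ and $\mu\in M_\a\setminus W_\a$, and let $r=(C(\lambda_1)\triangle C(\lambda_2))\cup C(\mu)$. Every $\ell_q$ is then a finite modification of $C(\lambda_1)\triangle C(\lambda_2)$. That set belongs to no $M_\xi[G]$ with $\xi<\a$, since no such $M_\xi$ contains both $\lambda_1$ and $\lambda_2$. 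So your $L^r$ is not a subset of $\pwmf\cap\bigcup_{\xi<\a}M_\xi[G]$.

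A smaller point: for a single $q$, $q\forces x\sub^*\dot r$ does not imply $x\sub^*\ell_q$. You should first fix the finite exceptional set $F$, then choose $q\in G$ forcing $x\setminus F\sub\dot r$.

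The missing idea is to approximate $r$ from the viewpoint of each $N\in\mathcal N_\a$ separately. The coordinates of $S\setminus N$, not just those of $S\setminus W_\a$, must go on the forcing side.
\begin{itemize}
\item[$\circ$] Let $\dot r_N$ be the partial evaluation of $\dot r$ by $G\cap\mathrm{Fn}(S\cap N,2)$. For $p\in G\cap\mathrm{Fn}(S\setminus N,2)$, put $\ell_{p,N}=\set{n}{p\forces n\in\dot r_N}$.
\item[$\circ$] This set has a name that is a countable subset of $N$. By countable closure that name lies in $N$, so $\ell_{p,N}\in N[G]$.
\item[$\circ$] $G\cap\mathrm{Fn}(S\setminus N,2)$ is generic over $V[G\cap N]\supseteq\P(\w)\cap N[G]$. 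Hence your forcing argument now works for $x\in N[G]$.
\item[$\circ$] The countably many sets $\ell_{p,N}$ form $L^r$, and $U^r$ follows by taking complements.
\item[$\circ$] Since $\mathcal N_\a$ need not belong to $M_\a$, the ``furthermore'' clause should come from $M_\a[G]\prec H_\theta[G]$, as in the paper, rather than from definability.
\end{itemize}
The paper implements the same per-model idea differently. It uses $\ell^r_{p,N}=\bigcup\set{\mathrm{val}_G(\dot x)}{\dot x\in N,\ p\forces\dot x\sub\dot r}$ for $p\in G_S$, together with its support-shifting second claim.
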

\begin{proof}
Fix $\a < \k$. 
To prove the first claim, we begin by working in $V$. 
As $|M_\xi| = \aleph_1$ for each $\xi < \a$, we have $\card{\k \cap \bigcup_{\xi < \a}M_\xi} \leq |\a| \cdot \aleph_1 < \k$, which implies $\card{\mathrm{Lim}(\k) \setminus \bigcup_{\xi < \a}M_\xi} = \k$. 
By part $(4)$ of our definition of sage Davies trees, $\seq{M_\xi}{\xi < \a} \in M_\a$, and (because $\k \in M_\a$ and $M_\a$ is closed under basic set-theoretic operations) $\mathrm{Lim}(\k) \cap \bigcup_{\xi < \a}M_\xi \in M_\a$. 
By elementarity, $M_\a \models \card{\mathrm{Lim}(\k) \setminus \bigcup_{\xi < \a}M_\xi} = \k$, and in particular there is a bijection $f \in M_\a$ from $\mathrm{Lim}(\k) \setminus \bigcup_{\xi < \a}M_\xi$ to $\k$. Restricting $f$ to $M_\a$ gives a bijection from $\mathrm{Lim}(\k) \cap M_\a \setminus \bigcup_{\xi < \a}M_\xi$ to $\k \cap M_\a$, and $|\k \cap M_\a| = \aleph_1$. (In fact, standard arguments show that $\w_1 \sub M$ whenever $M$ is a countably closed model of enough of \zfc.) 
Thus $\card{\mathrm{Lim}(\k) \cap M_\a \setminus \bigcup_{\xi < \a}M_\xi} = \aleph_1$. 

Because $M_\a$ is closed under basic set-theoretic operations, an ordinal $\lambda \in M_\a$ if and only if $\lambda+n \in M_\a$ for all $n \in \w$. Thus for $\lambda \in \mathrm{Lim}(\k)$, 
$$\textstyle \lambda \in M_\a \setminus \bigcup_{\xi < \a}M_\xi \qquad \Leftrightarrow \qquad [\lambda,\lambda+\w) \sub M_\a \setminus \bigcup_{\xi < \a}M_\xi.$$
Still working in the ground model, for each $\lambda \in \mathrm{Lim}(\k)$ let $c_\lambda$ denote the natural isomorphism $\mathrm{Fn}\big( [\lambda,\lambda+\w),2 \big) \to \mathrm{Fn}(\w,2)$. 

In $V[G]$, each $c_\lambda$ maps $G \cap \mathrm{Fn}\big( [\lambda,\lambda+\w),2 \big)$ to a generic-over-$V$ subset of $\mathrm{Fn}(\w,2)$, whose union is a Cohen-generic subset of $\w$, say $g_\lambda$. Define $C(\lambda) = g_\lambda$ for all $\lambda \in \mathrm{Lim}(\k)$. This definition of $C$ has only $G$ as a parameter, so $C \in M_\a[G]$ for all $\a < \k$, and it is clear that $C$ maps $\mathrm{Lim}(\k) \cap M_\a$ to a set of $\aleph_1$ reals mutually Cohen generic over $V\big[ G \cap \bigcup_{\xi<\a}M_\xi \big]$.

\smallskip

Before proving the second claim we must introduce a few definitions. For convenience, let $\PP = \mathrm{Fn}(\k,2)$, and given $S \sub \k$, let $\PP_S = \mathrm{Fn}(S,2)$. 
Let $G_S = G \cap \PP_S$. 
Note that if $\k \supseteq T \supseteq S$ then $\PP_T = \PP_S*\PP_{T \setminus S}$, $G_{T \setminus S}$ is $\PP_{T \setminus S}$-generic over $V[G_S]$, and $V[G_T] = V[G_S][G_{T \setminus S}]$. 
If $\dot x$ is a nice $\PP$-name for a subset of $\w$ and $S \sub \k$, define the \emph{partial evaluation} of $\dot x$ with respect ot $G_S$ to be 
$$\mathrm{pval}_S(\dot x) \,=\, \set{ \big( n,p \rest (\k \setminus S) \big) }{(n,p) \in \dot x \text{ and } p \rest S \in G_S}.$$ 
For example, if $\dot x$ is a $\PP_S$-name, then $p \rest (\k \setminus S) = \0$ whenever $(n,p) \in \dot x$, in which case 
$\mathrm{pval}_S(\dot x) \,=\, \set{(n,\0)}{(n,p) \in \dot x \text{ and } p \in G_S},$ 
which is just the canonical name $\check{y}$ for $y = \mathrm{val}_{G_S}(\dot x) = \mathrm{val}_{G}(\dot x)$. 
At the other extreme, if $\dot x$ is a $\PP_{\k \setminus S}$-name then $p \rest (\k \setminus S) = p$ and $p \rest S =  \0 \in G$ whenever $(n,p) \in \dot x$, so that $\mathrm{pval}(\dot x) = \dot x$. 
Observe that for any nice $\PP$-name $\dot x$ and any $S \sub \k$, $\mathrm{pval}_S(\dot x)$ is a nice $\PP_{\k \setminus S}$-name for a subset of $\w$, and $\0 \forces \mathrm{val}_G(\dot x) = \mathrm{val}_{G_{\k \setminus S}}\big(\mathrm{pval}_{G_S}(\dot x)\big)$. 

\smallskip

We now proceed to the proof of the second claim. 
Fix $\a < \k$. 
By part $(3)$ of the definition of a sage Davies tree, there is a countable set $\mathcal N_\a$ of countably closed elementary submodels of $H_\theta$ with $\bigcup \mathcal N_\a = \bigcup_{\xi < \a}M_\xi$.

\begin{claim} 
Let $r \in \P(\w) \cap M_\a[G] \setminus \bigcup_{\xi < \a} M_\xi[G]$, let $\dot r \in M_\a$ be a nice name for $r$, and let $N \in \mathcal N_\a$. 
For each $p \in \PP$, there is a nice name $\dot \ell \in N$ for the set 
$$\ell^r_{p,N} \,=\, \textstyle \bigcup \set{\mathrm{val}_G(\dot x)}{\dot x \text{ is a nice $\PP$-name in $N$ and } p \forces \dot x \sub \dot r}.$$ Consequently, $\ell^r_{p,N} \in N[G]$. 
\end{claim}
\noindent\emph{Proof of claim:} 
Fix $p \in \PP$. 
Working in $V$, let $X$ denote the set of all nice names $\dot x$ for subsets of $\w$ such that $\dot x \in N$ and $p \forces \dot x \sub \dot r$.  
Let 
$$\dot \ell_0 \,=\, \textstyle \bigcup X \,=\, \set{(n,q)}{\text{there is some }\dot x \in X \text{ with } (n,q) \in \dot x}.$$ 
If $\dot x \in X$ then $\dot x \in N$ and, because nice names for subsets of $\w$ are countable, this implies $\dot x \sub N$. 
Hence $\dot \ell_0 = \bigcup X \sub N$, and it is not hard to see that $\dot \ell_0$ is a $\PP$-name for $u^r_{p,N}$. 
But $\dot \ell_0$ may not be a nice name, and we have no reason to believe $\dot \ell_0 \in N$ 
(as neither $p$ nor $\dot r$ need be in $N$). 

For each $n \in \w$ let $B_n = \set{q}{(n,q) \in \dot \ell_0}$. Note that if $(n,q) \in \dot \ell_0$ then $(n,q) \in N$, hence $q \in N$; therefore $B_n \sub N \cap \PP = \PP_{\k \cap N}$. 
Let $A_n$ be an antichain in $\PP_{\k \cap N}$ such that $\bigvee A_n = \bigvee B_n$ in the Boolean completion of $\PP_{\k \cap N}$. 
Let $\dot \ell = \bigcup_{n \in \w}(\{n\} \times A_n)$. 
This is a nice $\PP_{\k \cap N}$-name for a subset of $\w$, and by our choice of the $A_n$, $\0 \forces \dot \ell_0 = \dot \ell$, which means $\dot u$ is a nice $\PP_{\k \cap N}$-name for $\ell^r_{p,N}$. 
Furthermore, because $\PP_{\k \cap N}$ has the ccc, each $A_n$ is countable. Because $N$ is countably closed, $A_n \in N$ for all $n$. Because $N$ is also closed under basic set-theoretic operations, $\{n\} \times A_n \in N$ for all $N$ and, using the countable closure of $N$ again, this implies $\dot \ell \in N$.
\hfill {\tiny $\square$}

\begin{claim} 
Let $r \in \P(\w) \cap M_\a[G] \setminus \bigcup_{\xi < \a} M_\xi[G]$. 
Suppose $N \in \mathcal N_\a$, $x \in N[G]$, and $x \sub r$. 
Let $\dot r \in M_\a$ be a nice name for $r$, and let $S = \mathrm{supp}(\dot r)$. There is a nice name $\dot x$ for $x$, with $\dot x \in N$, and a $p \in G_S$ such that $p \forces \dot x \sub \dot r$.
\end{claim}
\noindent\emph{Proof of claim:} 
Let $\dot x_0$ be a nice $\PP$-name for $x$ such that $\dot x_0 \in N$, and fix a condition $q \in G$ such that $q \forces \dot x_0 \sub \dot r$. Let $S_0 = \mathrm{supp}(\dot x_0)$, and note that $S_0 \sub N$ (because $\dot x_0 \in N$). Without loss of generality, we may (and do) suppose $\mathrm{supp}(q) \sub \mathrm{supp}(\dot x_0) \cup \mathrm{supp}(\dot r) = S_0 \cup S \sub N \cup S$. 
Let $T = \mathrm{supp}(q) \setminus S$, and note that $T$ is a finite subset of $\k \cap N$. 

Let $\dot x = \mathrm{pval}_T(\dot x_0)$ and let $p = q \rest (\k \setminus T)$. 
Clearly $p \in G$, because $q \in G$ and $q$ extends $p$. 
Because $q \forces \dot x_0 \sub \dot r$ in $V$, 
$p \forces \mathrm{pval}_T(\dot x_0) \sub \mathrm{pval}_T(\dot r)$ in $V[G_T]$. 
But $\mathrm{pval}_T(\dot x_0) = \dot x$ by definition, $\mathrm{pval}_T(\dot r) = \dot r$ because $T \cap S = \0$, and $V[G_T] = V$ because $T$ is finite. 
Hence $p \forces \dot x \sub \dot r$ in $V$. 
Finally, $\dot x$ is definable from $\dot x_0$ and $G_T$, which are both in $N$, and therefore $\dot x \in N$.
\hfill {\tiny $\square$}

\medskip

Now fix $r \in \P(\w) \cap M_\a[G] \setminus \bigcup_{\xi < \a} M_\xi[G]$. 
Let $\dot r \in M_\a$ be a nice name for $r$, and 
let $S = \mathrm{supp}(\dot r)$. 
Because $\dot r$ is a nice name for a real, $S$ is countable. 
This implies $G_S$ is countable as well. 
For each $N \in \mathcal N_\a$ and each $p \in G_S$, let $\ell^r_{p,N}$ be defined as in the statement of the claim above. Let 
$$L^r_0 \,=\, \textstyle \set{\ell^r_{p,N}}{p \in G_S \text{ and }N \in \mathcal N_\a}.$$
This is a countable set, because $G_S$ and $\mathcal N_\a$ are both countable, and it is a subset of $\P(\w) \cap \bigcup_{N \in \mathcal N_\a}N[G] = \P(\w) \cap \bigcup_{\xi < \a}M_\xi[G]$ by the first of the two claims above. 

Let $x \in \P(\w) \cap \bigcup_{\xi < \a}M_\xi[G] = \P(\w) \cap \bigcup_{N \in \mathcal N_\a}N[G]$, and fix $N \in \mathcal N$ with $x \in N[G]$. 
If $x \sub r$, then by the second of the two claims above, there is a nice name $\dot x \in N$ for $x$ and a $p \in G_S$ such that $p \forces \dot x \sub \dot r$. By the definition of $\ell^r_{p,N}$, this means $x \sub \ell^r_{p,N} \in L^r_0$. 
Conversely, if $x \sub \ell^r_{p,N}$ for some $p \in G_S$ and $N \in \mathcal N_\a$, then it is clear from the definition of $\ell^r_{p,N}$ that $x \sub r$. 

Thus, given $x \in \P(\w) \cap \bigcup_{\xi < \a}M_\xi[G]$, we have $x \sub r$ if and only if $x \sub \ell$ for some $\ell \in L^r_0$. Passing to equivalence classes, set $L^{[r]_{\mathrm{Fin}}} = \set{[\ell]_{\mathrm{Fin}}}{\ell \in L^r_0}$ has the desired properties. 
An essentially identical argument can be used to prove the analogous statement involving $U^r$. Alternatively, one can deduce the statement about $U^r$ from the statement about $L^r$ by taking complements, setting $U^r = \set{[\w]_{\mathrm{Fin}} - \ell\,}{\ell \in L^{[\w]_{\mathrm{Fin}} - r}}$. 

Finally, to prove the ``furthermore'' part of the theorem, we use the elementarity of $M_\a$. 
The part of statement $(2)$ that precedes the ``furthermore'' is a first-order description of $L^r$ and $U^r$ using only parameters from $M_\a[G]$. (Note that $\bigcup_{\xi < \a}M_\a[G] \in M_\a[G]$ because $\seq{M_\xi}{\xi < \a} \in M_\a$ and $G \in M_\a[G]$). 
Because $M_\a \preceq H_\theta$, the definability of the forcing relation implies $M_\a[G] \preceq H_\theta[G]$. (For more details on this, see \cite[Theorem 2.11]{Shelah2}.) 
Thus this description of $L^r$ and $U^r$, which we just proved is witnessed in $H_\theta$, is witnessed in $M_\a[G]$ as well.
\end{proof}

\section{Building the automorphisms}

An \emph{almost permutation} on $\w$ is a bijection between cofinite subsets of $\w$. 
Every permutation of $\w$ is an almost permutation, but not every almost permutation is a permutation or even the restriction of one (e.g. consider the successor function $n \mapsto n+1$). 
Every almost permutation $f$ of $\w$ induces an automorphism $\alpha_f$ of $\pwmf$, defined by the formula 
$$\alpha_f \big( [A]_{\mathrm{Fin}} \big) \,=\, \big[ f[A] \big]_{\mathrm{Fin}}$$
for every $A \sub \w$.  
An automorphism of $\pwmf$ that is induced in this way by an almost permutation of $\w$ is called \emph{trivial}. 
Note that as there are only $\continuum$ functions $\w \to \w$, there are only $\continuum$ trivial automorphisms of $\pwmf$. Consequently, if there are more than $\continuum$ automorphisms of $\pwmf$, then some (most) are nontrivial. 

Walter Rudin proved in \cite{Rudin} that \ch implies there are $2^\continuum$ automorphisms of $\pwmf$. 
Years later, Shelah proved in \cite[Chapter 4]{Shelah} that it is consistent with \zfc that all automorphisms are trivial. Shelah's result was extended and refined in \cite{SS}, \cite{Velickovic}, and \cite{DFV}, revealing ultimately that the forcing axiom $\mathsf{OCA}$ (also known as $\mathsf{OGA}$ or $\mathsf{OCA}_T$) implies all automorphisms are trivial. 
Further models in which all automorphisms are trivial include models with $\continuum > \aleph_2$ in \cite{Dow} or with $\dom = \aleph_1$ in \cite{FarSh}. (Recall that \oca implies $\dom = \continuum = \aleph_2$). 

The following theorem covers parts $(1)$ and $(2)$ of the main theorem stated in the introduction. 

\begin{theorem}\label{thm:main}
Suppose $V \models \ch$, let $\k \geq \aleph_2$ be a regular cardinal, and let $G$ be a $\mathrm{Fn}(\k,2)$-generic filter over $V$. 
Assume that for any $P$ there is a sage Davies tree of length $\k$ over $P$. 
If $\phi$ is an automorphism of $\pwmf$ in $V$, then in $V[G]$ there are $2^\k$ automorphisms of $\pwmf$ extending $\phi$. 
\end{theorem}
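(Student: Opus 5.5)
Fix a sage Davies tree $\seq{M_\a}{\a<\k}$ over a parameter $P$ containing $\phi$, $\k$, $\mathrm{Fn}(\k,2)$ and its nice names. Since \ch holds and $\k$ is regular, $\k^\w=\k$, so Theorem~\ref{thm:firewood} applies. In $V[G]$ put $B_\a=\pwmf\cap\bigcup_{\xi<\a}M_\xi[G]$. These algebras increase, are continuous at limits, and their union is all of $\pwmf$: every real has a nice name with countable support, hence lies in some $M_\a[G]$ by property (2) together with countable closure. Ground-model reals lie in $M_0[G]$, because each $M_\a$ is countably closed and so contains $\pwmf\cap V$ under \ch. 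The plan is to build, by recursion on $\a$, automorphisms $\Phi_\a$ of $B_\a$ forming an increasing chain, with $\Phi_1\supseteq\phi$, and to take $\Phi=\bigcup_\a\Phi_\a$. The $2^\k$ extensions will come from a binary choice made at each successor stage.

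The successor step, from $B_\a$ to $B_{\a+1}$, is the Shelah--Stepr\={a}ns argument. Because $|M_\a|=\aleph_1$ and \ch holds in $V$, the set $\pwmf\cap M_\a[G]$ has size $\aleph_1$. Enumerate it as $\seq{b_\xi}{\xi<\w_1}$, choosing the enumeration inside $M_\a[G]$. Run a back-and-forth of length $\w_1$, maintaining partial isomorphisms $\Psi_\xi\supseteq\Phi_\a$ whose domain and range are generated over $B_\a$ by countably many elements, all lying in $M_\a[G]$. Let $D=\langle B_\a\cup F\rangle$ with $F$ countable, and let $b=b_\xi$ be the next element to add. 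We need $b$ to cut $D$ in an $(\w,\w)$-gap. Theorem~\ref{thm:firewood}(2) gives countable $L^r,U^r\in M_\a[G]$ describing the cuts of each Boolean combination $r$ of $b$ with elements of $F$. Countably many such combinations suffice, and these yield countable $L,U\sub D$ with $L,U\in M_\a[G]$ such that $b$ realizes the type over $D$ determined by $(L,U)$. We then need some $y$ realizing the image type $(\Psi[L],\Psi[U])$ over $\Psi[D]$. The gap-filling poset for this type is countable, hence equivalent to Cohen forcing, and it belongs to $M_\a[G]$. By Theorem~\ref{thm:firewood}(1), $M_\a[G]$ contains $\aleph_1$ Cohen reals, namely $C(\lambda)$ for $\lambda\in\mathrm{Lim}(\k)\cap M_\a\setminus\bigcup_{\xi<\a}M_\xi$. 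These are mutually generic over $V[G\cap\bigcup_{\xi<\a}M_\xi]$. Choose one not yet used, and decode from it a $y$ that meets every relevant dense set. The definable part of the type is handled by genericity over the model containing $\Psi[D]$, and the countable data lie in a model over which the chosen real is still generic. Set $\Psi(b)=y$, then treat the inverse map symmetrically. After $\w_1$ steps, with unions at limits, both domain and range equal $B_{\a+1}$: the range is covered because the backward steps enumerate $B_{\a+1}$ as well, and the reals $y$ lie in $M_\a[G]$.

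The main obstacle is the genericity step. The gap $(\Psi[L],\Psi[U])$ depends on the whole construction so far, and $\Psi[D]$ is not contained in $V[G\cap\bigcup_{\xi<\a}M_\xi]$. I would handle this by showing that at each stage the partial map $\Psi$ is generated over $\Phi_\a$ by countably many Cohen reals $C(\lambda)$ together with elements of $B_{\a+1}$. A fresh $C(\lambda)$, for $\lambda$ not among those used, is then Cohen over a model containing $\Phi_\a$, $B_\a$, the countably many previously used reals, and $b$. For the last point, choose $\lambda$ outside the countable support of $b$'s name; this uses that $\Phi_\a\in V[G\cap\bigcup_{\xi<\a}M_\xi][\ldots]$ is appropriately definable, which the induction should carry. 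Such a $\lambda$ exists because only countably many of the $\aleph_1$ fresh coordinates have been used or lie in relevant supports. The resulting $y$ avoids every element of $\Psi[D]$ that is not already forced by $\Psi[L]$ or $\Psi[U]$.

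For the count, at each successor $\a$ we make one binary choice, for instance filling the first gap with $y$ versus a distinct filler such as $y$ altered by another fresh real. Distinct branches give automorphisms that disagree on $b_0$ at stage $\a$. This yields $2^\k$ branches, and each branch produces an automorphism of $\pwmf=\bigcup_\a B_\a$ extending $\phi$. Limit stages are plain unions, and property (6) keeps $B_\a$ continuous with $|\a|$-sized bookkeeping.
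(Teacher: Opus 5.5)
Your construction of a \emph{single} extension follows the paper's route: the algebras $B_\a$, Theorem~\ref{thm:firewood}(2) for the cuts, the fresh reals $C(\lambda)$ as gap fillers, and a length-$\w_1$ back-and-forth. Your use of $L^r,U^r$ for every Boolean combination of $b$ with $F$ is, if anything, more careful than the paper's $\tilde L,\tilde U$.

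The gap is in the count of $2^\k$, and it sits exactly at the step you defer with ``$\Phi_\a$ is appropriately definable, which the induction should carry.'' For the filler $y$ to lie in $B_{\a+1}=\pwmf\cap M_\a[G]$, the image gap $(\Psi[L],\Psi[U])$ must belong to $M_\a[G]$. In particular $\Phi_\a[L]$ must, for the countable $L\in M_\a[G]$. This is what makes $\Phi_{\a+1}$ an automorphism of $B_{\a+1}$ rather than an embedding into a larger algebra. Nothing in your argument puts these sets into $M_\a[G]$. That model is not countably closed in $V[G]$, since it has only $\aleph_1$ reals. The paper gets them from hypothesis $(*)$, i.e.\ $\Phi_\a\in M_\a[G]$, which requires the earlier choices to be an element of $M_\a$.

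With one fixed tree and a binary choice at every stage, the choices along a branch form $h\rest\a$ for an arbitrary $h\in 2^\k$. By your own argument, distinct $h\rest\a$ give distinct $\Phi^h_\a$. Yet $|M_\a[G]|=\aleph_1<2^{\aleph_1}\le 2^{|\a|}$ once $\a\ge\w_1$. So for all but $\aleph_1$ of these restrictions, $(*)$ fails and the stage-$\a$ step is unjustified. You therefore do not get $2^\k$ valid branches.

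The fix is to put the choice function into the parameter, $P=(R,h,\phi)$. But then the tree, the algebras $B_\a$, the enumerations and your $b_0$ all depend on $h$. ``Two branches first differ at stage $\a$, where they agree on $\Phi_\a$'' no longer compares like with like. This is what property (6) is for; continuity of $B_\a$ at limits is immediate from the definition and does not need it. The paper proceeds as follows:
\begin{enumerate}
\item Take $F\sub 2^\k$ of size $2^\k$ whose members pairwise differ on a stationary set.
\item Encode $h(\a)$ by either fixing $c_0^\a=C(\delta_\a)$ or swapping it with $c_1^\a=C(\delta_\a+\w)$.
\item Intersect the two clubs from (6) for the trees of $h$ and $h'$. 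For $\delta$ in the intersection, both trees have $\k\cap\bigcup_{\xi<\delta}M_\xi=\delta$, so $\min\big(\k\cap M_\delta\setminus\bigcup_{\xi<\delta}M_\xi\big)=\delta$ and $c_0^\delta=C(\delta)$ in both constructions.
\item Choose such a $\delta$ with $h(\delta)\neq h'(\delta)$. Then one of $\Phi_h,\Phi_{h'}$ fixes $C(\delta)$ and the other does not.
\end{enumerate}
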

\begin{proof}
Let $\phi$ be an automorphism of $\pwmf$ in $V$, and (still in $V$) fix a well ordering $R$ of $H_\k$. 
Let $F$ be a set of $2^\k$ functions $\k \to 2$ with the property that if $h,h' \in F$ and $h \neq h'$, then $\set{\a \in \k}{h(\a) \neq h'(\a)}$ is stationary. 
(To see that there is such a set of functions, first recall that there is a partition $\P$ of $\k$ into $\k$ stationary sets. Take $F$ to be the set of all functions $h: \k \to 2$ such that $h \rest A$ is constant for every $A \in \P$.)

To prove the theorem, we construct for every $h \in F$ an automorphism $\Phi_h$ of $\pwmf$ in $V[G]$ such that $\Phi_h \rest V = \phi$, and furthermore, we do this in such a way that $h \neq h'$ implies $\Phi_h \neq \Phi_{h'}$. 
Fix $h \in F$, and let $\seq{M_\a}{\a < \k}$ be a sage Davies tree over $P = (R,h,\phi)$. 

Let $C$ denote the function described in Theorem~\ref{thm:firewood}$(1)$. 
For each $\a < \k$, let $\dlt_\a = \min \big( \k \cap M_\a \setminus \bigcup_{\xi < \a} \big)$. 
Note that $\dlt_\a$ is a limit ordinal, and because $\dlt_\a$ and $\dlt_\a+\w$ are definable from each other, $\dlt+\w \in \k \cap M_\a \setminus \bigcup_{\xi < \a}M_\xi$ as well. 
Define $c^\a_0 = C(\dlt_\a)$ and $c^\a_1 = C(\dlt_\a+\w)$ for all $\a < \k$.

Define $B_0 = \pwmf \cap V$, and if $0 < \a < \k$ then define 
$$B_\a \,=\, \textstyle \big\<\hspace{-1.2mm}\big\< \pwmf \cap \bigcup_{\xi < \a}M_\xi[G] \,\big\>\hspace{-1.2mm}\big\>,$$
the subalgebra of $\pwmf$ generated by $\pwmf \cap \bigcup_{\xi < \a}M_\xi[G]$. 
The fact that $M_0$ is countably closed and $M_0 \preceq H_\theta$ implies $\P(\w) \cap V \sub M_0$. 
Hence $B_0 \sub B_1$, and it is clear that if $\a < \b < \k$ then $B_\a \sub B_\b$. 
Thus $\seq{B_\a}{\a < \k}$ is an increasing sequence of subalgebras of $\pwmf$, and (it follows easily from the definition that) $B_\a = \bigcup_{\xi < \a}B_\xi$ for limit $\alpha$.  

We claim furthermore that $\pwmf = \bigcup_{\a < \k}B_\a$. 
To see this, it suffices to show, working in $V$, that every nice $\PP$-name for a subset of $\w$ is a member of $\bigcup_{\a < \k}M_\a$. If $\dot x$ is such a name, then $\mathrm{supp}(\dot x)$ is a countable subset of $\k$. 
By part $(2)$ of the definition of a sage Davies tree, there is some $\a < \k$ with $\mathrm{supp}(\dot x) \in M_\a$. 
Using the fact that $M_\a$ is countably closed and $\w \sub M_\a$, it follows that $\dot x \in M_\a$. 

Suppressing the subscript for now, let us construct from $h$ an automorphism $\Phi$ of $\pwmf$ in $V[G]$. 
We aim to build $\Phi$ by transfinite recursion, working in $V[G]$. 
The recursion is organized into $\k$ main stages. 
At stage $\a$ of the recursion we begin with an automorphism $\Phi_\a$ of $B_\a$, and we wish to extend it to an automorphism $\Phi_{\a+1}$ of $B_{\a+1}$ while also encoding the value of $h(\a)$ into $\Phi_{\a+1}$. 
Through the recursion we maintain the following inductive hypothesis at stage $\a$: 
\begin{itemize}
\item[$(*)$] the sequence of automorphisms $\seq{\Phi_\xi}{\xi \leq \a}$ is uniformly definable from $P$ and $\seq{M_\xi[G]}{\xi < \a}$. In particular $\Phi_\a \in M_\a$. 
\end{itemize}

For the base case $\a = 0$ we begin with the automorphism $\Phi_0 = \phi$ of $B_0$. 
Note that our inductive hypothesis is trivially satisfied at $\a=0$. 
At limit stages we simply take $\Phi_\a = \bigcup_{\xi < \a}\Phi_\xi$. 
This is an automorphism of $B_\a$ because $B_\a = \bigcup_{\xi < \a}B_\xi$ for limit $\alpha$. And this preserves our inductive hypothesis: $\seq{\Phi_\xi}{\xi \leq \a}$ is defined as $\seq{\Phi_\xi}{\xi < \a} \cat \< \bigcup_{\xi < \a}\Phi_\xi \>$, and this defining formula is uniform across all limit ordinals.

We must now describe how $\Phi_{\a+1}$ is obtained from $\Phi_\a$. This single stage of the main recursion is itself accomplished by a transfinite recursion of length $\w_1$. 
Observe that $M_\a \in M_{\a+1}$ (it is definable as the last element of $\seq{M_\xi}{\xi \leq \a} \in M_{\a+1}$), which means that $M_{\a+1}$ contains a canonical well ordering, given by $R$, of the nice names for subsets of $\w$ in $M_\a$. 
Evaluating these names, and deleting repeats and members of $B_\a$ (which is also definable in $M_{\a+1}$), this means $M_{\a+1}$ contains a canonical enumeration $\seq{b_\xi}{\xi < \w_1}$ of the members of $\pwmf \cap M_\a[G] \setminus B_\a$. 
Making some simple modifications to this well order if needed, we may (and do) assume 
$b_0 = c^\a_0$. 
The goal of our stage $\a$, length-$\w_1$ sub-recursion is to construct
\begin{enumerate}
\item two increasing sequences $\seq{D_\a^\b}{\b < \w_1}$ and $\seq{E_\a^\b}{\b < \w_1}$ of subalgebras of $B_{\a+1}$, such that $D_\a^\b = \bigcup_{\xi < \b}D_\a^\xi$ and $E_\a^\b = \bigcup_{\xi < \b}E_\a^\xi$ for limit $\b$, and $\bigcup_{\b < \w_1}D_\a^\b = \bigcup_{\b < \w_1}E_\a^\b = B_{\a+1}$;
\item a sequence of injective homomorphisms $\Phi_\a^\b: D_\a^\b \to E_\a^\b$, for $\b < \w_1$, such that $\Phi_\a^0 = \Phi_\a$ and if $\b < \b'$ then $\Phi_\a^\b = \Phi_\a^{\b'} \rest D_\a^\b$.
\end{enumerate}
Furthermore, at each step of this sub-recursion on $\b$ we maintain the following inductive hypotheses:  
\begin{enumerate}
\item $D_\a^\b,E_\a^\b \supseteq \<\hspace{-.9mm}\< B_\a \cup \set{b_\xi}{\xi < \b} \>\hspace{-.9mm}\>$.  
\item There is a countable $X^\b \sub M_\a[G]$ such that $D_\a^\b = \<\hspace{-.9mm}\< B_\a \cup X^\b \>\hspace{-.9mm}\>$. 
\item Furthermore, the sequence $\seq{X^\xi}{\xi \leq \b}$ is uniformly definable from parameters in $M_\a[G]$, hence it is a member of $M_\a[G]$. 
\item The sequence $\seq{\Phi_\a^\xi}{\xi \leq \b}$ is also uniformly definable from parameters in $M_\a[G]$, hence a member of $M_\a[G]$.
\end{enumerate} 

For the base step, take $D_\a^0 = E_\a^0 = B_\a$ and $\Phi_\a^0 = \Phi_\a$. 
This trivially satisfies hypotheses $(1)-(3)$. It satisfies $(4)$ because $\Phi_\a \in M_\a[G]$ by our larger inductive hypothesis $(*)$.

The limit steps are also easy: because $D_\a^\b = \bigcup_{\xi < \b}D_\a^\xi$ and $E_\a^\b = \bigcup_{\xi < \b}E_\a^\xi$ for limit $\b$, we simply take $\Phi_\a^\b = \bigcup_{\xi < \b}\Phi_\a^\xi$. 
This preserves all four of the inductive hypotheses. 

To obtain $\Phi^1_\a$, we do one of two things, depending on the value of $h(\a)$. 
If $h(\a)=0$ then we map $b_0 = c^\a_0$ to itself. 
Because $c^\a_0$ is Cohen-generic over $V[G \cap \bigcup_{\xi < \a}M_\a]$, it is free over $B_\a$.  Consequently, the map $\Phi^0_\a \cup \{(c^\a_0,c^\a_0)\}$ naturally extends to an automorphism $\Phi^1_\a$ of $\<\hspace{-.9mm}\< B_\a \cup \{c^\a_0\} \>\hspace{-.9mm}\>$, and we set $D^1_\a = E^1_\a = \<\hspace{-.9mm}\< B_\a \cup \{c^\a_0\} \>\hspace{-.9mm}\>$. 
If $h(\a)=1$, then we map $c^\a_0$ to $c^\a_1$ and $c^\a_1$ to $c^\a_0$. 
Because $c^\a_0$ and $c^\a_1$ are mutually Cohen-generic over $V[G \cap \bigcup_{\xi < \a}M_\a]$, each $c^\a_i$ is free over $\<\hspace{-.9mm}\< B_\a \cup \{c^\a_{1-i}\} \>\hspace{-.9mm}\>$. 
Consequently, the map $\Phi^0_\a \cup \{(c^\a_0,c^\a_1),(c^\a_1,c^\a_0)\}$ naturally extends to an automorphism $\Phi^1_\a$ of $\<\hspace{-.9mm}\< B_\a \cup \{c^\a_0,c^\a_1\} \>\hspace{-.9mm}\>$, and we set $D^1_\a = E^1_\a = \<\hspace{-.9mm}\< B_\a \cup \{c^\a_0,c^\a_1\} \>\hspace{-.9mm}\>$.
This clearly preserves the inductive hypotheses (bearing in mind, for inductive hypothesis $(4)$, that $M_\a[G]$ is closed under the Boolean operations of $\pwmf$). 

For the general successor step, fix $\b$ with $0 < \b < \w_1$, and let $D_\a^\b$, $E_\a^\b$, and $\Phi_\a^\b$ be given satisfying $(1)-(4)$. 
We obtain $\Phi_\a^{\b+1}$ in two steps, first extending our algebras and mappings so that $b_\b$ gets into the domain of $\Phi_\a^{\b+1}$, and extending again to put $b_\b$ into the image. 
Specifically, we first extend $D_\a^\b, E_\a^\b, \Phi_\a^\b$ to some $\bar D_\a^\b, \bar E_\a^\b, \bar \Phi_\a^\b$ with $b_\b \in \bar D_\a^\b$, 
and then we extend $\bar D_\a^\b, \bar E_\a^\b, \bar \Phi_\a^\b$ to $D_\a^{\b+1}, E_\a^{\b+1}, \Phi_\a^{\b+1}$ with $b_\b \in E_\a^{\b+1}$. 

If $b_\b \in D_\a^\b$ already, then there is nothing to do in the first step: simply define $\bar D_\a^\b = D_\a^\b$, $\bar E_\a^\b = E_\a^\b$, and $\bar \Phi_\a^\b = \Phi_\a^\b$. 

Suppose instead that $b_\b \notin D_\a^\b$. 
Applying Theorem~\ref{thm:firewood}, fix countable sets $L,U \sub \pwmf \cap \bigcup_{\xi < \a}M_\xi[G]$ with $L,U \in M_\a[G]$ having the properties stated in part $(2)$ of the theorem with $r=b_\b$. 
Let $X^\b$ denote the set described in inductive hypotheses $(2)$ and $(3)$,  
and define 
\begin{align*}
\tilde L &\,=\, \set{\ell \in \<\hspace{-.9mm}\< L \cup X^\b \>\hspace{-.9mm}\> }{\ell < b_\b}, \\
\tilde U &\,=\, \set{\ell \in \<\hspace{-.9mm}\< U \cup X^\b \>\hspace{-.9mm}\> }{b_\b < u}.
\end{align*} 
These are countable subsets of $D_\a^\b$ and, using the properties of $L$ and $U$ stated in Theorem~\ref{thm:firewood}, if $x \in D_\a^\b$ then
\begin{itemize}
\item[{\scriptsize $\circ$}] $x \leq b_\b \ \, \Leftrightarrow \ \, x \leq \ell \text{ for some }\ell \in \tilde L$, 
\item[{\scriptsize $\circ$}] $b_\b \leq x \ \, \Leftrightarrow \, \ u \leq x \text{ for some }u \in \tilde U$.
\end{itemize} 
In other words, $(\tilde L,\tilde U)$ is an $(\w,\w)$-gap in $D_\a^\b$, and this gap is split by $b_\b$. 
It follows that 
$\big( \Phi_\a^\b[\tilde L],\Phi_\a^\b[\tilde U] \big)$ is an $(\w,\w)$-gap in $E_\a^\b$. 
If we are to extend $\Phi_\a^\b$ to $b_\b$, we must map $b_\b$ to a member of $M_\a[G]$ that splits this gap. 

Because $L,U,X^\b,b_\b \in M_\a[G]$, we have $\tilde L,\tilde U \in M_\a[G]$. 
By inductive hypothesis $(4)$, we have $\Phi_\a^\b[\tilde L],\Phi_\a^\b[\tilde U] \in M_\a[G]$ as well. 
Because $X^\b$ is countable and $D_\a^\b = \<\hspace{-.9mm}\< B_\a \cup X^\b \>\hspace{-.9mm}\>$, and because the members of the (uncountable) set $C_\a$ are all Cohen-generic over $B_\a$, all but countably many members of $C_\a$ are Cohen-generic over $D_\a^\b$. 
Fix some such $c \in C_\a$. 

The standard forcing poset used to split an $(\w,\w)$-gap in $\pwmf$ is countable, hence forcing equivalent to the Cohen forcing $\mathrm{Fn}(\w,2)$. 
Because $c$ is Cohen-generic over $E_\a^\b$, and because $M_\a[G]$ satisfies (enough of) \zfc, this means that, working within $M_\a[G]$, it is possible to define a generic splitting $s_\b \in M_\a[G]$ of the $(\w,\w)$-gap 
$\big( \Phi_\a^\b[\tilde L],\Phi_\a^\b[\tilde U] \big)$ in $E_\a^\b$. 
Furthermore, we may (and do) make the choice of $s_\b$ canonical by taking the $R$-least $c \in C_\a$ (i.e., the one with the $R$-least name) generic over $D^\b_\a$, the $R$-least dense embedding between $\mathrm{Fn}(\w,2)$ and $\big( \Phi_\a^\b[\tilde L],\Phi_\a^\b[\tilde U] \big)$, etc. 
Because $s_\b$ splits the gap $\big( \Phi_\a^\b[\tilde L],\Phi_\a^\b[\tilde U] \big)$, the function $\Phi_\a^\b \cup \{(b_\b,s_\b)\}$ extends to an isomorphism from $\bar D^\b_\a = \<\hspace{-.9mm}\< D_\a^\b \cup \{b_\b\} \>\hspace{-.9mm}\>$ to $\bar E^\b_\a = \<\hspace{-.9mm}\< E_\a^\b \cup \{s_\b\} \>\hspace{-.9mm}\>$. 
Let $\bar \Phi_\a^\b$ denote this isomorphism. 

The construction of $D_\a^{\b+1}$, $E_\a^{\b+1}$, and $\Phi_\a^{\b+1}$ from $\bar D_\a^{\b}$, $\bar E_\a^{\b}$, and $\bar \Phi_\a^{\b}$ works in exactly the same way, only using $\bar E_\a^\b$ in the place of $\bar D_\a^b$, $(\bar \Phi_\a^{\b})^{-1}$ in the place of $\Phi_\a^{\b}$, etc.

These choices clearly preserve inductive hypotheses $(1)$ and $(2)$. 
Because all the choices in this construction were made canonically via $R$, the preceding construction constitutes a definition of $D_\a^{\b+1}$, $E_\a^{\b+1}$, and $\Phi_\a^{\b+1}$ from $D_\a^{\b}$, $E_\a^{\b}$, and $\Phi_\a^{\b}$. 
Moreover this definition is uniform, as it works the same way for all $\b$. 
It follows that $(3)$ and $(4)$ are preserved. 

This concludes our length-$\w_1$ sub-recursion. Inductive hypothesis $(1)$ makes it clear that $\bigcup_{\b < \w_1} D^\b_\a = \bigcup_{\b < \w_1} E^\b_\a = B_{\a+1}$, and the fact that $\Phi^\b_\a = \Phi^{\b'}_\a \rest D_\a^\b$ whenever $\b < \b'$ shows that $\Phi_{\a+1} = \bigcup_{\b < \w_1}\Phi^\b_\a$ is an automorphism of $B_{\a+1}$. 
Finally, the entire sub-recursion was defined from $M_\a[G]$, $C$, and $R$ (or $P$). 
Applying the Recursion Theorem in $M_{\a+1}[G]$, and using inductive hypothesis $(4)$, this means that our entire length-$\w_1$ sub-recursion, and its result $\Phi_{\a+1}$, are definable from $\seq{M_\xi[G]}{\xi \leq \a}$ and $R$ in $M_{\a+1}[G]$. Furthermore, this definition is uniform across various $\a$ (i.e., our description of the inductive step does not depend on $\a$), so that $(*)$ is preserved at successor steps of the main recursion. 

This completes the recursion, the result of which is an automorphism $\Phi = \bigcup_{\a < \k}\Phi_\a$ of $\pwmf = \bigcup_{\a < \k}B_\a$ in $V[G]$ that extends $\phi = \Phi_0$. 
It remains to show that there are $2^\k$ automorphisms like this. 

Recall that our notation has been slightly dishonest: the construction above depends on a function $h \in F$, although we redacted $h$ from $\Phi$ for the sake of tidiness. 
Restoring our subscript notation from the beginning of the proof, for every $h \in F$ the preceding construction gives an automorphism $\Phi_h$ of $\pwmf$ in $V[G]$ extending $\phi$. 
To finish the proof, we must show that if $h,h' \in F$ and $h \neq h'$, then $\Phi_h \neq \Phi_h'$. 

Given our construction, one might be tempted to think that $h$ can be recovered from $\Phi_h$: if $\Phi_h(c^\a_0) = c^\a_0$ then $h(\a) = 0$, and if $\Phi_h(c^\a_0) \neq c^\a_0$ then $h(\a) = 1$. 
However, the value of $c^\a_0$ is determined note only by $C$, which does not depend on $h$, but also $M_\a \setminus \bigcup_{\xi < \a}M_\xi$, which does depend on $h$. 
Recall that $h$ was taken as part of the parameter $P$ for the sage Davies tree $\seq{M_\a}{\a < \k}$ used to obtain $\Phi_h$, which means that the identity of the $c^\a_0$'s depends on $h$. 
Thus the proposed definition of $h$ from $\Phi_h$ is circular.

To get around this, we use part $(6)$ of our definition of a sage Davies tree. 
Let $\seq{M_\a^h}{\a < \k}$ denote the sage Davies tree used to obtain $\Phi_h$, and 
let $\big\< M_\a^{h'} :\, \a < \k \big\>$ denote the sage Davies tree used to obtain $\Phi_{h'}$. 
Similarly, let 
$(c^\a_0)^h \!= C\big(\! \min (\k \cap M_\a^h \setminus \bigcup_{\xi < \a}M_\xi^h) \big)$ and 
$(c^\a_0)^{h'} \!= C\big(\! \min (\k \cap M_\a^{h'} \setminus \bigcup_{\xi < \a}M_\xi^{h'}) \big)$ for all $\a < \k$. 
Applying part $(6)$ of our definition of sage Davies trees, 
$$D \,=\, \set{\dlt \in \k}{\dlt = \k \cap \textstyle \bigcup_{\xi < \dlt}M_\xi^h = \k \cap \textstyle \bigcup_{\xi < \dlt}M_\xi^{h'}}$$
is closed and unbounded in $\k$. 
If $\dlt \in D$, then $(c^\dlt_0)^h = (c^\dlt_0)^{h'} = C(\dlt)$. 

By our choice of $F$, the set $S = \set{\a < \k}{h(\a) \neq h'(\a)}$ is stationary. In particular, there is some $\dlt \in D$ with $h(\dlt) \neq h'(\dlt)$. 
By the previous paragraph, $(c^\dlt_0)^h = (c^\dlt_0)^{h'} = C(\dlt)$. 
But since $h(\dlt) \neq h'(\dlt)$, one of the functions $\Phi_h$ and $\Phi_{h'}$ has $C(\dlt)$ as a fixed point, and the other does not. Hence $\Phi_h \neq \Phi_{h'}$. 
\end{proof}

\begin{proof}[Proof of part $(3)$ of the main theorem]
We now describe how to modify the preceding argument in order to accommodate singular $\k$. 

First consider the case $\k^\w = \k$. In this case Theorem~\ref{thm:firewood} applies, and the proof of the previous theorem goes through unmodified except for the last 2 paragraphs, where we use part $(6)$ of the definition of a sage Davies tree (the part of the definition that assumes $\k$ is regular) in order to prove the $\Phi_h$ are all distinct. When $\k$ is singular we do not aim to prove all the $\Phi_h$ are distinct, but only that at least one of them is a nontrivial automorphism. 

Consider the automorphism $\Phi_h$ constructed from the constant function $h(\a) = 1$. 
We claim that this function is a nontrivial automorphism of $\pwmf$. 
To see this, fix an almost bijection $f: \w \to \w$. 
In the ground model $V$, there is a nice name $\dot f$ for $f$ with countable support $S$. 
Setting $G_S = G \cap \mathrm{Fin}(S,2)$, we have $f \in V[G_S]$. 
The reals $\set{c^\a_\xi}{\a < \k, \xi < \w_1}$ are mutually Cohen-generic over $V$, and it follows that for all but countably many values of $\a$, the reals $c_\a^0,c_\a^1$ are mutually Cohen-generic over $V[G_S]$. 
In particular, this implies that $f[c^0_\a] \neq^* c^1_\a$. Because $\Phi_h(c^0_\a) = c^1_\a$, this means that $\Phi_h$ is not induced by $f$. 
As $f$ was an arbitrary almost bijection of $\w$, this shows $\Phi_h$ is nontrivial. 

Next consider the case $\k^\w > \k$. In this case we must use a sage Davies tree of length $\nu = \k^\w$ rather than length $\k$. 
Also in this case, the proof of Theorem~\ref{thm:firewood}(1) does not work, although Theorem~\ref{thm:firewood}(2) still goes through. 

To work around not having access to Theorem~\ref{thm:firewood}(1), we use a generalized almost disjoint family in $[\k]^\w$. First, note that the set $\k^{<\w}$ of all finite sequences in $\k$ is a set of size $\k$. Associating to each infinite sequence $s \in \k^\w$ its set of restrictions $A_s = \set{s \rest n}{n \in \w}$, we see that there is a size-$\k^\w$ family $\set{A_s}{s \in \k^\w}$ of pairwise almost disjoint countable subsets of $\k^{<\w}$. 

Fix a bijection $\psi$ between $\nu = \k^\w$ and some size-$\k^\w$ family of pairwise almost disjoint countable subsets of $\k$. 
Now modify the proof of Theorem~\ref{thm:main} by including $\psi$ in the parameter for our sage Davies tree: that is, take $P = (R,h,\phi,\psi)$ instead of just $P = (R,h,\phi)$.

Using the fact that $\nu > \k$, we can show (using an argument essentially identical to the first paragraph of the proof of Theorem~\ref{thm:firewood}) that $\card{\nu \cap M_\a \setminus \bigcup_{\xi < \a}M_\xi} = \aleph_1$ for each $\a < \nu$. Because $\psi \in M_\a$, the $\aleph_1$-many new ordinals $<\!\nu$ give us access to $\aleph_1$-many new countable subsets of $\k$, namely the sets $\psi(\z)$ where $\z \in \nu \cap M_\a \setminus \bigcup_{\xi < \a}M_\xi$. 

Now suppose $S \in M_\xi$ for some $\xi < \a$ and $S \in [\k]^\w$. 
Because $\psi$ maps to an almost disjoint family, $\mathrm{Hit}(S) = \set{\z < \nu}{\psi(\z) \cap S \text{ is infinite}}$ is a countable subset of $\nu$ definable from $S$. 
Consequently, $\mathrm{Hit}(S) \sub M_\xi$. 
Furthermore, if $\z \in \nu \cap M_\a \setminus \bigcup_{\xi < \a}M_\xi$, then $\psi(\z) \cap S$ is finite for all $S \in [\k]^\w \cap \bigcup_{\xi < \a}M_\xi$. 
In particular, $C_\a = \set{\psi(\z)}{\z \in \nu \cap M_\a \setminus \bigcup_{\xi < \a}M_\xi}$ is a size-$\aleph_1$ collection of countable subsets of $\k$ in $M_\a$, each one of which is almost disjoint from every such set in $\bigcup_{\xi < \a}M_\xi$. 

These sets in $C_\a$ take the role of the sets $[\lambda,\lambda+\w)$ in the proof of Theorem~\ref{thm:firewood}$(1)$. Because they are almost disjoint from each other and from the sets in $\bigcup_{\xi < \a}M_\xi$, they give rise to a set of $\aleph_1$ mutually generic Cohen reals in $M_\a[G]$ that can be used to carry out the construction of $\Phi_h$ at stage $\a$. 
As with the $\k^\w = \k$ case at the start of this proof, going through this construction with $h$ the constant function $\alpha \mapsto 1$ yields a nontrivial automorphism of $\pwmf$.
\end{proof}

We conclude with some further observations and open questions. 
The most pressing question raised by our main theorem is:

\begin{question}
Can parts $(2)$ and $(3)$ of the main theorem be proved from \zfc? And can the number of automorphisms in $(3)$ be raised to $2^\k$?
\end{question}

\noindent It is worth pointing out that if it is consistent that all automorphisms of $\pwmf$ are trivial in the $\k$-Cohen model for some $\k \geq \aleph_\w$, then proving this will require a large cardinal hypothesis. 

As was mentioned already, the hypothesis of the theorem, that there is a sage Davies tree of length $\k$, holds provided that $\mathsf{SCH}$ and $\square_\lambda$ hold at all $\lambda \leq \k$ with $\mathrm{cf}(\lambda) = \k$. 
The failure of $\square_\lambda$ at a singular $\lambda$ while $\mathsf{SCH}$ holds is quite high: \cite[Corollary 5]{SZ} shows that it gives rise to an inner model with infinitely many Woodin cardinals. (And even without $\mathsf{SCH}$, one still gets an inner model with a Woodin cardinal.) 
The failure of $\mathsf{SCH}$ has much lower consistency strength: by work of Gitik in \cite{G1,G2}, the failure of $\mathsf{SCH}$ is equiconsistent with the existence of a measurable cardinal $\mu$ with Mitchell order $\mu^{++}$. This, then, is the bottleneck for constructing sage Davies trees: if there is a model without long sage Davies trees, then it is consistent to have (at least) measurable cardinals of large Mitchell order. 

If we assume \gch in the ground model rather than merely \ch, then $\mathsf{SCH}$ holds automatically, so a lack of sage Davies trees would entail the failure of $\square_\lambda$ for singular $\lambda$. 
Furthermore, it is proved in \cite{BDMY} that under \gch, the ``Very Weak Square" of Foreman and Magidor from \cite{FM} suffices to construct sage Davies trees. 


\begin{question}
Is it consistent, relative to some large cardinal hypothesis, that all automorphisms of $\pwmf$ are trivial in the $\k$-Cohen model for some $\k \geq \aleph_\w$? 
Is it consistent (relative to large cardinals) that there is a regular $\k > \aleph_\w$ such that not all ground model automorphisms of $\pwmf$ extend to $2^\k$ automorphisms in the $\k$-Cohen model?
\end{question}

%

Finally, let us point out that while we have a decent understanding of automorphisms of $\pwmf$ after adding Cohen reals, it is open what happens after adding random reals.

\begin{question}
Are there nontrivial automorphisms of $\pwmf$ in the random real model? What about the $\k$-random real model for $\k \geq \aleph_3$?
\end{question}



\end{document}